\newcommand{\PreserveBackslash}[1]{\let\temp=\\#1\let\\=\temp}
\newcolumntype{C}[1]{>{\PreserveBackslash\centering}p{#1}}
\newcolumntype{R}[1]{>{\PreserveBackslash\raggedleft}p{#1}}
\newcolumntype{L}[1]{>{\PreserveBackslash\raggedright}p{#1}}
\def\wbar{\accentset{{\cc@style\underline{\mskip8mu}}}}
\renewcommand{\vec}[1]{\mbox{\boldmath \small $#1$}}
\newcommand{\sss}{\mathcal{S}}
\theoremstyle{plain}
\newtheorem{thm}{Theorem}
\newtheorem{remark}{Remark}
\newtheorem{cor}{Corollary}
\newtheorem{pro}{Proposition}
\newtheorem{example}{Example}
\def\F{{\mathcal F}}
\newcommand{\R}{\mathbb{R}}
\def\F{{\mathcal F}}
\def\T{{\mathcal T}}
\begin{document}
\bibliographystyle{unsrt}
\title{Interactions between Hlawka Type-1 and Type-2 Quantities} 
\author{Xin Luo}
\renewcommand{\thefootnote}{\fnsymbol{footnote}}
\footnotetext{ Academy of Mathematics and Systems Science, Chinese Academy of Sciences, Beijing 100190, P.R. China. \\
Email address: {\tt xinluo@amss.ac.cn} \; and \;{\tt xinlnew@163.com} (Xin Luo).}

\date{}
\maketitle
\allowdisplaybreaks

\begin{abstract}
The classical Hlawka inequality possesses deep connections with zonotopes and zonoids in convex geometry, and has been related to Minkowski space. We introduce Hlawka Type-1 and Type-2 quantities, and establish a  Hlawka-type relation between them, which connects a vast number of strikingly different variants of the Hlawka inequalities, such as Serre's reverse Hlawka inequality in the future cone of the Minkowski space, the Hlawka inequality for subadditive function on abelian group by Ressel, and the integral analogs by Takahasi et al. Besides, we announce several enhanced results, such as the Hlawka inequality for the power of measure function. Particularly, we give a complete study of the Hlawka inequality for  quadratic form which relates to a work of Serre.
\\~\\
\textbf{Keywords}:  Hlawka's inequality, quadratic form, subadditivity
\end{abstract}

\section{Introduction}

 Hlawka's inequality saying for any $x,y,z$ in a inner product space
\begin{equation}\label{eq:Hlawka-classical}
\|x\|+\|y\|+\|z\|+\|x+y+z\|\ge \|x+y\|+\|y+z\|+\|z+x\|,
\end{equation}
 was proved firstly by Hlawka and originally appeared in 1942 in a paper of Hornich \cite{Hornich42}.
 It has a long series of investigations and extensions, such as the Hlawka inequality in integral form  \cite{TTW00,TTW09} and abelian group  \cite{R15}. The readers can also find an excellent summary of related works in \cite{Wl}, and the beautiful relations to discrete and convex geometry like zonotopes as well as zonoids by Witsenhausen  \cite{W78,W73,SW1983}.

Recently, Serre consider the pseudo-norm for the future cone of the Minkowski space \cite{Serre15}.  There he presented the reverse Hlawka-type inequality. 

According to these beautiful works, the classical Hlawka inequality has deep connections with zonotopes and zonoids in convex geometry, and relates to the geometry on the timelike cones of  Minkowski spaces. 

Note that the proof of \eqref{eq:Hlawka-classical} depends on the identity
\begin{equation}\label{eq:Hlawka-classical-2form}
\|x\|^2+\|y\|^2+\|z\|^2+\|x+y+z\|^2= \|x+y\|^2+\|y+z\|^2+\|z+x\|^2,
\end{equation}
which is an equality on quadratic forms. To some extend, the one-homogeneous inequality \eqref{eq:Hlawka-classical} essentially relates to the two-homogeneous equality \eqref{eq:Hlawka-classical-2form}.

In this work, we introduce Hlawka Type-1 and Type-2 quantities, and establish a Hlawka-type relation which encodes the signatures of them (see Theorem \ref{thm:operator-main}). 

This helps us to give the Hlawka inequality for a class of functions on semigroups. By this result, we can connect a vast number of Hlawka inequalities in the literature, even though they come from various perspectives and are very different from each other. Furthermore, we announce several exciting results, such as the Hlawka inequality on the power of measure function. Particularly, we investigate the Hlawka inequality on quadratic forms thoroughly.

For a glimpse of these results, we give some remarkable notes here:

\begin{itemize}
\item  For a reversed version of the Hlawka inequality, Serre gave a demonstration in the future cone of the Minkowski space \cite{Serre15}. In that paper, he shows:
if $q$ is a quadratic form on $\R^n$ with signature $(1, n-1)$, then the length $l=\sqrt{q}$
satisfies
\begin{equation}\label{eq:Serre}
l(x)+l(y)+l(z)+l(x+y+z)\leq l(x+y)+l(y+z)+l(z+x)
\end{equation}
for every vectors $x,y,z$ in the future cone with respect to $q$. In the present paper, we show a simple proof for \eqref{eq:Serre}, and give a systematic study for the Hlawka inequality on quadratic forms (see Section \ref{sec:quadratic}).

\item Ressel \cite{R15} shows a generalization of the Hlawka inequality for subadditive functions on abelian groups. In this work, we extend his result to the setting of  sub/super-additive functions on semigroups (see Section \ref{sec:semigroup}). For convenience, Ressel's result is provided in Example \ref{cor:R15} as an application. 
    
\item Takahasi et al \cite{TTW00,TTW09} study the integral analogs of the Hlawka inequality. In Section \ref{sec:integral}, we generalize this integral inequality to the form of positive linear operator, and their main theorem is rewritten in Example \ref{cor:TTW00}.
\end{itemize}

This paper provides a theorem combining the above different progresses together in a unified form (see Theorem \ref{thm:operator-main}), which also produces several other promotive results.

\section{ The Hlawka-type relation between Hlawka Type-1 and Type-2 quantities}\label{sec:Hlawka-relation}

\noindent \textbf{Basic settings}: 
\begin{itemize}
\item Given a nonempty set $\Omega$, let  $\mathbb{R}^\Omega$ be 
  the ring  of all real valued functions on $\Omega$  equipped with the the standard addition operator `$+$' and the standard product operator `$\cdot$'.
 \item Let $\sss\subset \mathbb{R}^\Omega$ be a sub-ring of real functions in $\mathbb{R}^\Omega$, i.e., $\sss$ is closed under both summation and  multiplication, as well as $1\in \sss$ and $\sss$ is also a real linear space. Here, we use $1$ to denote the constant function in $\sss$ satisfying $1(\omega)=1$, $\forall\omega\in\Omega$.
 \item Let $T: \sss \to \mathbb{R}$ be a linear and signature-preserving function  (i.e., for $\zeta\in \sss$ satisfying $\forall\omega\in\Omega,\,\zeta(\omega)\geq0$, there holds $T(\zeta)\geq0$).
\end{itemize}

From the basic settings, we can see that $\sss$ is a ring of  functions and it is  a real  linear space with the base consisted of some functions on $\Omega$, and $T$ is indeed a nonnegative linear functional on $\sss$. For convenience, $T$ can be regarded as a summation operator or an integral operator.

 \begin{thm}\label{thm:operator-main}
Given $\Omega,\sss,T$  in basic settings, $a,b\in \mathbb{R}$ with $a\ne0$ and  $a+b >0$,   for a nonempty set $X$, let $\eta, \xi:\Omega\to X$ and $f: X\rightarrow \mathbb{R}$  satisfy  $f\circ\xi,f\circ\eta\in \sss$ and  $(f\circ\xi)(\omega)-(f\circ\eta)(\omega) \leq b$, $\forall\omega\in\Omega$, where `$\circ$' represents the composition operator, we have the following:

\item (I) If $(f\circ\xi)(\omega)+(f\circ\eta)(\omega) \leq a$, $\forall\omega\in\Omega$, then $H_2\ge 0$ implies $H_1\ge 0$, and $H_1\le 0$ implies $H_2\le 0$;
\item(II) If $(f\circ\xi)(\omega)+(f\circ\eta)(\omega) \geq a$, $\forall\omega\in\Omega$, then $H_1\ge 0$ implies $H_2\ge 0$, and $H_2\le 0$ implies $H_1\le 0$.

\noindent Here
$$H_1=(T(1)-c)b+T(f\circ\eta)- T(f\circ\xi)$$
is called the {\sl Hlawka Type-1  quantity}, 
and
$$H_2=(T(1)-c)b^2+T(f^2\circ\eta) - T(f^2\circ\xi)$$
is called the   {\sl Hlawka Type-2 quantity},   in which $c:=\frac{2}{a}T(f\circ\eta)$.
 \end{thm}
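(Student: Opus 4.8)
The plan is to reduce both parts of the theorem to a single algebraic identity together with the positivity of $T$. Write $u := f\circ\xi$ and $v := f\circ\eta$ for the two composed functions (so that $f^2\circ\xi = u^2$ and $f^2\circ\eta = v^2$ are their pointwise squares). The hypothesis common to (I) and (II) reads $u(\omega) - v(\omega)\le b$ for every $\omega$, i.e.\ the function $b - (u-v)$ is pointwise nonnegative; the extra hypothesis in (I) (resp.\ (II)) says that $a - (u+v)$ is pointwise nonnegative (resp.\ nonpositive). The whole point will be to feed the \emph{product} of these two factors into $T$.

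Concretely, I would consider
\[
\Phi := \big(b - (u - v)\big)\big(a - (u + v)\big),
\]
and expand it. The decisive structural fact is that the mixed term $uv$ cancels, leaving
\[
\Phi = ab\cdot 1 - (a+b)\,u + (a-b)\,v + u^2 - v^2,
\]
a combination of $1,u,v,u^2,v^2$ only; in particular $\Phi\in\sss$, and these are exactly the functions to which $T$ is applied in $C_1$ and $C_2$. Applying the linear functional $T$ and then substituting $c=\tfrac{2}{a}T(f\circ\eta)$ (here $a\ne 0$ is used, and this is precisely where the definition of $c$ earns its keep) I expect the clean identity
\[
T(\Phi) = (a+b)\,C_1 - C_2
\]
to drop out after collecting the $T(1)$, $T(f\circ\eta)$ and $T(f\circ\xi)$ coefficients and using $ab\,c = 2b\,T(f\circ\eta)$. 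Verifying this identity term by term is the one genuine computation of the proof; discovering the correct factorisation $\Phi$, rather than checking the identity once $\Phi$ is in hand, is the real obstacle.

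With the identity established the rest is bookkeeping. Since $T$ is signature-preserving and $b-(u-v)\ge 0$ pointwise, in case (I) we also have $a-(u+v)\ge 0$, so $\Phi\ge 0$ pointwise and hence $T(\Phi)=(a+b)C_1-C_2\ge 0$, i.e.\ $(a+b)C_1\ge C_2$. As $a+b>0$, this yields both $C_2\ge 0\Rightarrow C_1\ge 0$ and $C_1\le 0\Rightarrow C_2\le 0$. In case (II) the second factor reverses sign, so $\Phi\le 0$ pointwise, $T(\Phi)\le 0$, and $(a+b)C_1\le C_2$; dividing again by $a+b>0$ gives $C_1\ge 0\Rightarrow C_2\ge 0$ and $C_2\le 0\Rightarrow C_1\le 0$. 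Thus all four implications follow at once from the sign of the single quantity $T(\Phi)$.
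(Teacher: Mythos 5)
Your proposal is correct and takes essentially the same route as the paper: the paper's proof applies $T$ to the very same product, written as $(a-f\circ\eta-f\circ\xi)(f\circ\eta+b-f\circ\xi)$, and establishes the identical key identity $T(\Phi)=(a+b)C_1-C_2$ (its equation \eqref{eq:T}), concluding with the same sign bookkeeping via $a+b>0$ and the signature-preservation of $T$. Your pointwise expansion with the $uv$-cancellation is merely a tidier presentation of the paper's term-by-term verification of that identity.
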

 
 \begin{remark}
In many examples and applications (see Section \ref{sec:application}), $\xi$ is some function of $\eta$, and $f$ can be thought of as a norm. So,   to some extent,  $H_1$ can be seen as a 1-homogeneous
function of $\eta$.
 \end{remark}
 
In summary, Theorem \ref{thm:operator-main} says that under suitable `summation control' and `difference control', the signatures of Hlawka Type-1  quantity $H_1$ and Hlawka Type-2  quantity $H_2$ are essentially depended on each other in some way.

 \begin{proof}
 First, the relation among the quantities in Theorem \ref{thm:operator-main} can be shown in the following diagram:
   \begin{center}
 \begin{spacing}{0.5}
$$
\xymatrix{
\Omega\ar[rr]^{\xi,\eta}\ar@/_1pc/[rrrr]_{ \scriptstyle\begin{array}{c}\scriptstyle f\circ \xi,f\circ\eta \\{ \scriptstyle \in}\end{array} }& & X\ar[rr]^f & &\mathbb{R}\\ & & {\sss \ar[rru]_{T} } & &
}
$$
\end{spacing}
\end{center}

We note the following identities: 
\begin{align*}
  &\;\;\;\;  T\left((a-f\circ\eta-f\circ\xi)(f\circ\eta+b-f\circ\xi)\right)
\\&= aT(f\circ\eta)-T(f^2\circ\eta)+T(f^2\circ\xi)+T\left((f\circ\eta) \cdot (f\circ\xi)\right)-T((f\circ\xi )\cdot (f\circ\eta))
\\&\;\;\;\; -aT(f\circ\xi)-bT(f\circ\eta)
-bT(f\circ\xi)+abT(1)
\\&= aT(f\circ\eta)+\left(T(1)-c\right)b^2+ T(f\circ\eta)b + (T(f^2\circ\xi) -T(f^2\circ\eta) -(T(1)-c)b^2)
\\&\;\;\;\; -T(f\circ\xi)(a+b)+(T(1)a-2T(f\circ\eta))b +T((f\circ\eta) \cdot (f\circ\xi))-T((f\circ\xi )\cdot (f\circ\eta))
\\&=  T(f\circ\eta)(a+b)+\left(T(1)-c\right)b^2 + (T(f^2\circ\xi) -T(f^2\circ\eta) -(T(1)-c)b^2)
\\&\;\;\;\; -T(f\circ\xi) (a+b)+(T(1)-c)ab
\\&= \left((T(1)-c) b+T(f\circ\eta) -T(f\circ\xi)\right)(a+b)
 + (T(f^2\circ\xi) -T(f^2\circ\eta) -(T(1)-c)b^2),
\end{align*}
where the notation $f^2\circ\eta:=(f\circ\eta)\cdot(f\circ\eta)$ is used. Therefore, we obtain
\begin{align}
  &\;\;\;\;  T\left((a-f\circ\eta-f\circ\xi)(f\circ\eta+b-f\circ\xi)\right) \notag
\\&= \left((T(1)-c) b+T(f\circ\eta) -T(f\circ\xi)\right)(a+b) - ((T(1)-c)b^2+T(f^2\circ\eta) -T(f^2\circ\xi)). \label{eq:T}
\end{align}

(I). For any $ \omega\in \Omega$, $f\circ\eta(\omega)+b\geq f\circ\xi(\omega)$ and $a\geq f\circ\eta(\omega)+f\circ\xi(\omega)$.
By the assumption,   $\forall \omega\in\Omega$,
$$(a-f\circ\eta(\omega)-f\circ\xi(\omega))(f\circ\eta(\omega)+b-f\circ\xi(\omega))\geq 0.$$
This deduces that
$$T\left((a-f\circ\eta-f\circ\xi)(f\circ\eta+b-f\circ\xi)\right)\geq0.$$
Accordingly, Eq.~\eqref{eq:T} gives
$$\left((T(1)-c)b+T(f\circ\eta) -T(f\circ\xi)\right)(a+b)\geq (T(1)-c)b^2+T(f^2\circ\eta) -T(f^2\circ\xi)$$
which arrives the final result.

(II). Since only the assumption $a\leq f\circ\eta(\omega)+f\circ\xi(\omega)$ is reversed, similar process gives
$$\left((T(1)-c)b+T(f\circ\eta) -T(f\circ\xi)\right)(a+b)\leq (T(1)-c)b^2+T(f^2\circ\eta) -T(f^2\circ\xi)$$
and then the reversed case could be verified immediately.
\end{proof}


\begin{remark}
From the proof of Theorem \ref{thm:operator-main}, it is obvious that the conditions could be weaken as follows:

Given $a\in \mathbb{R}\setminus\{0\}, b\in \mathbb{R}$ with $  a+b >0 $, for $\eta, \xi:\Omega\to X$ and $f: X\rightarrow \mathbb{R}$, let $\tilde{\Omega}=\{\omega\in\Omega|b+f\circ\eta(\omega)-f\circ\xi(\omega)\neq 0\}$.

 If  $a \geq f\circ\eta(\omega)+f\circ\xi(\omega),\; b \geq f\circ\xi(\omega)-f\circ\eta(\omega) $,  $\forall\omega\in
\tilde{\Omega}$, then
$H_2\ge 0$ $\Rightarrow$ $H_1\ge 0$, and $H_1\le 0$ $\Rightarrow$ $H_2\le 0$.

If $a \leq f\circ\eta(\omega)+f\circ\xi(\omega),\; b \geq f\circ\xi(\omega)-f\circ\eta(\omega)$,  $\forall\omega\in
\tilde{\Omega}$, then $H_1\ge 0$ $\Rightarrow$ $H_2\ge 0$, and $H_2\le 0$ $\Rightarrow$ $H_1\le 0$.
\end{remark}

\begin{remark}
To some extent, the two key controls for $f\circ\eta+f\circ\xi$ and $f\circ\xi-f\circ\eta$ via constants $a$ and $b$ are indeed `summation control' and `difference control'. The identities used in the proof of Theorem \ref{thm:operator-main} is inspired by product-to-sum formulas. 
\end{remark}

\vspace{0.5cm}
\section{Applications to variant Hlawka inequalities}\label{sec:application}
\subsection{Applications to quadratic forms}\label{sec:quadratic}
 Given  a nondegenerate quadratic form $q$, i.e., $q(x)=x^\top Qx$, where $x^\top$ is the transpose of the vector $x$ and $Q$ is a matrix of dimension $n$.
Henceforth a pair $(k,n-k)$ is said to be the signature of $Q$, if $Q$ has $k$ positive eigenvalues and $(n-k)$ negative eigenvalues.   Consider $l=\sqrt{q}$,
then we have the following:
\begin{pro}\label{quadratic}
(P1) If $Q$ is of the signature $(1,n-1)$, then $l$ satisfies the reversed
Hlawka inequality in the closure of the future cone;
(P2) If $Q$ is of the signature $(n,0)$, then $l$ satisfies the Hlawka inequality in $\mathbb{R}^n$.
\end{pro}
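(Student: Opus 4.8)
The plan is to apply Theorem~\ref{thm:operator-main} with the choice $f=l=\sqrt{q}$, so that $f^2=q$ is the quadratic form itself. The whole argument rests on one algebraic fact: every quadratic form satisfies the exact two-homogeneous Hlawka identity
\begin{equation*}
q(x)+q(y)+q(z)+q(x+y+z)=q(x+y)+q(y+z)+q(z+x),
\end{equation*}
since, writing $B(u,v)=u^\top Q v$ for the symmetric bilinear form of $q$, both sides equal $2\bigl(q(x)+q(y)+q(z)\bigr)+2\bigl(B(x,y)+B(y,z)+B(z,x)\bigr)$. Consequently, once the data $\Omega,T,\eta,\xi$ are arranged so that the Hlawka two-form $C_2$ is a multiple of the difference of the two sides above, we automatically have $C_2=0$, and it is then Theorem~\ref{thm:operator-main} that transfers this vanishing into a definite sign for the Hlawka one-form $C_1$.

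First I would fix a finite index set $\Omega$ and let $T$ be summation over $\Omega$, then choose $\eta,\xi$ so that $T(f\circ\eta)-T(f\circ\xi)$ reproduces the one-form $l(x)+l(y)+l(z)+l(x+y+z)-l(x+y)-l(y+z)-l(z+x)$ while $T(f^2\circ\eta)-T(f^2\circ\xi)$ reproduces the vanishing two-form combination. The constants $a$ and $b$ are then pinned down to cancel the residual terms $(T(1)-c)b$ and $(T(1)-c)b^2$ occurring in $C_1$ and $C_2$; the natural normalization is to force $c=T(1)$, which by $c=\tfrac{2}{a}T(f\circ\eta)$ amounts to prescribing $a=2T(f\circ\eta)/T(1)$. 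With this choice the residuals disappear, $C_1$ equals the Hlawka one-form exactly, and $C_2=0$.

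With $C_2=0$ in hand, the two branches of Theorem~\ref{thm:operator-main} do the rest: part~(I) gives $C_2\ge 0\Rightarrow C_1\ge 0$, i.e.\ the standard Hlawka inequality, and part~(II) gives $C_2\le 0\Rightarrow C_1\le 0$, i.e.\ the reversed one~\eqref{eq:Serre}. Which branch applies is decided solely by the direction of the summation control $f\circ\eta+f\circ\xi\le a$ (case (I)) versus $f\circ\eta+f\circ\xi\ge a$ (case (II)), and this is precisely where the signature of $Q$ enters. For signature $(n,0)$ the form $q$ is positive definite, $l$ is a genuine norm, and the Cauchy--Schwarz inequality $B(u,v)\le l(u)l(v)$ holds; this subadditive regime selects case (I) and yields (P2), for which $l$ is in any case a classical norm. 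For signature $(1,n-1)$ and $u,v$ in the future cone, the \emph{reverse} Cauchy--Schwarz inequality $B(u,v)\ge l(u)l(v)\ge 0$ holds, $l$ is superadditive, and the data fall into case (II), giving the reversed inequality (P1). I would also record that the future cone is a convex cone closed under addition, so that all seven vectors $x,y,z,x+y,y+z,z+x,x+y+z$ remain inside it and $l$ stays real throughout.

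The main obstacle is the instantiation step together with the uniform verification of the controls with a single global constant $a$: Cauchy--Schwarz is a pairwise statement, and one must translate it into the inequalities $f\circ\eta(\omega)+f\circ\xi(\omega)\le a$ (resp.\ $\ge a$) holding simultaneously at \emph{every} index while keeping $C_1$ equal to the Hlawka one-form. These two requirements compete with one another, so the choice of $\Omega,\eta,\xi$ must be made carefully, and the weakened hypotheses of the Remark, which only constrain the indices where $b+f\circ\eta-f\circ\xi\neq 0$, are likely to be needed to make both hold at once. A secondary but genuine difficulty in (P1) is the boundary of the closure of the future cone, where some vectors are lightlike and $l=0$: there the reverse Cauchy--Schwarz degenerates, so I would prove the inequality first on the open future cone and then pass to its closure by continuity.
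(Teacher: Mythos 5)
Your high-level skeleton matches the paper's: apply Theorem~\ref{thm:operator-main} with $f=\sqrt q$, use the quadratic identity \eqref{quadratic1} to force $C_2=0$, let Cauchy--Schwarz versus its reversal (the Azteca inequality, which also shows the future cone is closed under addition) select branch (I) or (II), and pass to the lightlike boundary by continuity. But the decisive step --- the actual instantiation --- is both left unresolved by you and, as specified, incorrect. You propose to cancel the residual terms by forcing $c=T(1)$, i.e.\ $a=2T(f\circ\eta)/T(1)$, which requires importing $l(x+y+z)$ through the data themselves, say $\Omega=\{1,2,3,4\}$ with $\eta(4)=x+y+z$ and $f(\xi(4))=0$. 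Then $a=\tfrac12\bigl(l(x)+l(y)+l(z)+l(x+y+z)\bigr)$, and the case-(II) control $f\circ\eta(\omega)+f\circ\xi(\omega)\ge a$ fails pointwise: take $x=(2,1,0,\dots,0)$, $y=z=(1,0,\dots,0)$ in the future cone, so $l(x)=\sqrt3$, $l(y)=l(z)=1$, $l(y+z)=2$, $l(x+y+z)=\sqrt{15}$; at $\omega=1$ one has $l(x)+l(y+z)=\sqrt3+2\approx 3.73$, while $a=\tfrac12(\sqrt3+2+\sqrt{15})\approx 3.80$. You flag exactly this tension yourself (``the choice of $\Omega,\eta,\xi$ must be made carefully,'' the Remark's weakened hypotheses ``are likely to be needed''), but a proof has to resolve it, and your proposed normalization does not.

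The paper's resolution is the opposite of cancelling the residuals: it keeps $\Omega=\{1,2,3\}$, $\eta=(x,y,z)$, $\xi=(y+z,z+x,x+y)$, and chooses $a=l(x)+l(y)+l(z)=T(f\circ\eta)$ and $b=l(x+y+z)$, so that $c=\tfrac2a T(f\circ\eta)=2$ and $T(1)-c=1$. The surviving residuals $(T(1)-c)b$ and $(T(1)-c)b^2$ then supply precisely the missing terms $l(x+y+z)$ and $q(x+y+z)$: $C_1$ becomes the Hlawka one-form and $C_2=0$ by \eqref{quadratic1}, with no cancellation needed. Crucially, with this $a$ the single-constant controls collapse to exactly super-/sub-additivity of $l$: in (P1), $l(x)+l(y+z)\ge l(x)+l(y)+l(z)=a$ and $l(y+z)-l(x)\le l(x+y+z)=b$ follow from $l(u+v)\ge l(u)+l(v)$ on the cone, uniformly in $\omega$ by symmetry; in (P2) the triangle inequality gives both with reversed signs, and the degenerate cases $a=0$ or $a+b=0$ (i.e.\ $x=y=z=0$) are checked directly. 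Making the identification of $C_1$ and the uniform controls hold \emph{simultaneously} via this choice of $a$ and $b$ is the substantive content of the paper's proof, and it is exactly what your proposal leaves as an open obstacle.
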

\begin{proof}
We will apply Theorem \ref{thm:operator-main} to this setting, where the symbols appearing in  Theorem \ref{thm:operator-main} can be concretely chosen (see Table~\ref{tab:pro1}). 

Firstly, according to the definition of $q$, there is
\begin{equation}\label{quadratic1}
q(x+y+z)+q(x)+q(y)+q(z)=q(x+y)+q(x+z)+q(y+z).
\end{equation}

(P1) Since $Q$ is of the signature $(1,n-1)$, we may assume without loss of generality that $Q=\mathrm{diag}(1,-1,\cdots,-1)$ and let $X=\{x=(x_1,\cdots,x_n)|q(x)>0,x_1> 0\}$, i.e., the future cone in Minkowski space.
\begin{table}
\centering
\caption{\small The concrete quantities of Theorem \ref{thm:operator-main} used in the proof of Proposition \ref{quadratic} (1). While, for Proposition \ref{quadratic} (2), we only let $X$ be replaced by $\R^n$.}
\begin{tabular}{|l|l|}
               \hline
             Terminologies in Theorem \ref{thm:operator-main} & Concrete choices in Proposition \ref{quadratic} (1) for fixed $x,y,z\in X$ \\
               \hline
                 $\Omega=$ & $\{1,2,3\}$\\
                 \hline
               $X=$ & $\{x=(x_1,\cdots,x_n)|q(x)>0,x_1> 0\}$\\
               \hline
               $\sss=$ & $\R^{\{1,2,3\}}$\\
               \hline
               $T=$&$\sum_{\omega\in\{1,2,3\}}$, i.e., $T(g)=g(1)+g(2)+g(3)$, $\forall g\in\sss$\\
               \hline
               $\eta=$&$x,y,z$ for $\omega=1,2,3$ respectively\\
               \hline
               $\xi=$&$\sum_{\omega=1}^3\eta(\omega)-\eta$, i.e., $y+z, z+x, x+y$ for $\omega=1,2,3$ respectively\\
               \hline
               $f=$&$\sqrt{q}$\\
               \hline
               $a=$&$\sqrt{q(x)}+\sqrt{q(y)}+\sqrt{q(z)}$\\
               \hline
               $b=$&$\sqrt{q(x+y+z)}$\\
               \hline
            \end{tabular}
             \label{tab:pro1}
\end{table}
Indeed, there is no subtraction `$-$' in $X$ and it is closed under addition.
Since $q(x)=x^2_1-x^2_2-\cdots-x^2_n>0, \ \ q(y)=y^2_1-y^2_2-\cdots-y^2_n>0$,  i.e.,
$$
x^2_1>x^2_2+\cdots+x^2_n, \ \  y^2_1>y^2_2+\cdots+y^2_n,
$$
by Cauchy inequality, the following inequality holds:
$$
x^2_1y^2_1>(x^2_2+\cdots+x^2_n)(y^2_2+\cdots+y^2_n)\geq (x_2y_2+\cdots+x_ny_n)^2.
$$
Due to $x,y\in X$, there is $x_1y_1> 0$, so $x_1y_1>x_2y_2+\cdots+x_ny_n$, i.e., $x^\top Qy=y^\top Qx=x_1y_1-x_2y_2-\cdots-x_ny_n>0$.
Hence $q(x+y)=q(x)+q(y)+x^\top Qy+y^\top Qx> 0$, which implies $x+y\in X$.

According to the Azteca inequality (i.e., a reversed version of Cauchy inequality), for any $x,y \in X$, there is
$$(x_1y_1-\sum_{j\geq2}x_jy_j)^2\geq (x^2_1-\sum_{j\geq2}x^2_j)(y^2_1-\sum_{j\geq2}y^2_j),$$
i.e.,

\begin{equation}\label{quadratic2}
(x^\top Qy)^2\geq x^\top Qx\cdot y^\top Qy.
\end{equation}
By further elementary computation, \eqref{quadratic2} is equivalent to
\begin{equation}\label{quadratic3}
\sqrt{q(x+y)}\geq\sqrt{q(x)}+\sqrt{q(y)}
\end{equation}
whenever $x,y\in X$. By \eqref{quadratic3}, for any $x,y,z\in X$, there is
$$\sqrt{q(x)}+\sqrt{q(y+z)}\geq a,\ \ \sqrt{q(y+z)}-\sqrt{q(x)}\leq b.$$

By the parameters shown in Table~\ref{tab:pro1}, we further have $c=2$ in Theorem \ref{thm:operator-main}, $H_2=q(x+y+z)+q(x)+q(y)+q(z)-q(x+y)-q(x+z)-q(y+z)=0$ (by Eq.~\eqref{quadratic1}) and
$$H_1=l(x)+l(y)+l(z)+l(x+y+z)- l(x+y)-l(y+z)-l(z+x).$$
According to Theorem \ref{thm:operator-main} (II), $H_1\le0$, thus
$$
l(x)+l(y)+l(z)+l(x+y+z)\leq l(x+y)+l(y+z)+l(z+x),
$$
whenever $x,y,z\in X$. By taking limits, one can find that the reversed
Hlawka inequality also holds on the boundary of the future cone.

\vspace{0.2cm}

(P2) If $Q$ is $(n,0)$, we may assume without loss of generality that $Q=\mathrm{diag}(1,1,\cdots,1)$ and let $X=\mathbb{R}^n$. In this case, the inner product $\langle x, y\rangle:=x^\top Qy$ satisfies Cauchy inequality, i.e., $(x^\top Qy)^2\leq x^\top Qx\cdot y^\top Qy$.
By elementary computation, there is $\sqrt{q(x+y)}\leq\sqrt{q(x)}+\sqrt{q(y)}$. From this, we have $$\sqrt{q(x)}+\sqrt{q(y+z)}\leq a\ \text{ and } \ \sqrt{q(y+z)}-\sqrt{q(x)}\leq b.$$
In case $a+b> 0$, similar to (P1), according to Theorem \ref{thm:operator-main} (I) and Eq.~\eqref{quadratic1}, we have $l(x)+l(y)+l(z)+l(x+y+z)\geq l(x+y)+l(y+z)+l(z+x)$.
In the case of $a=0$ or $a+b=0$, i.e., $x=y=z=0$, it is obvious that $l(x)+l(y)+l(z)+l(x+y+z)= l(x+y)+l(y+z)+l(z+x)=0$.
Consequently, $l$ satisfies the Hlawka inequality.
\end{proof}

Proposition \ref{quadratic} contains Hlawka-type inequalities in the settings of both Euclidean case and Minkowski case. Moreover, by using Theorem \ref{thm:operator-main}, here we indeed provide an alternative and much  easier proof of the reverse Hlawka inequality in Minkowski space (Theorem 1.1 in \cite{Serre15}).

However, there is no similar conclusion on other cases that $Q$ is of the signature $(k,n-k)$ for $2\le k\le n-1$, and we will give an example to show this.

\begin{example}
If $Q$ is of the signature $(k,n-k)$ for $2\le k\le n-1$, we may assume without loss of generality that $Q=\textrm{diag}(\mathop{\underbrace{1,\cdots,1}}\limits_k,\mathop{\underbrace{-1,\cdots,-1}}\limits_{n-k})$. 
By finding suitable cone $X\subset \{x=(x_1,\cdots,x_n)\in\R^n|\,q(x)>0\}$, one may obtain that both the Hlawka inequality and the reversed Hlawka inequality fail for $l$.
Indeed, take $0<\epsilon \ll 1$, let
$$
\vec v_1=\vec v_2=(\mathop{\underbrace{1,1,\epsilon,\cdots,\epsilon}}\limits_k,
\mathop{\underbrace{1,\epsilon,\cdots,\epsilon}}\limits_{n-k})
,\ \
\vec v_3=(1,1,\epsilon,\cdots,\epsilon)
$$
and
$$
\vec v_4=(2,1,\epsilon,\cdots,\epsilon),\; \vec v_5=(1,2,\epsilon,\cdots,\epsilon).
$$
Consider $X=\{t_1\vec v_1+t_2\vec v_2+t_3\vec v_3+t_4\vec v_4+t_5\vec v_5|t_i>0, 1\leq i \leq 5\}$. It is clear that $X\subset\{x|\,q(x)>0\}$.
By computation, $l(\vec v_1)+l(\vec v_2)+l(\vec v_3)+l(\vec v_1+\vec v_2+\vec v_3)< l(\vec v_1+\vec v_2)+l(\vec v_2+\vec v_3)+l(\vec v_3+\vec v_1)$.
While, $l(\vec v_5)+l(\vec v_4)+l(\vec v_3)+l(\vec v_3+\vec v_4+\vec v_5)> l(\vec v_4+\vec v_5)+l(\vec v_3+\vec v_5)+l(\vec v_3+\vec v_4)$.
Thus, in $X$,  both the Hlawka inequality and the reversed Hlawka inequality do not hold for $l$.
\end{example}

\hspace{0.5cm}

\subsection{Applications to sub/super -additive functions on semigroups}\label{sec:semigroup}
Let $X$ in Theorem \ref{thm:operator-main} be an  abelian semigroup $(G,+)$, and let $F:G\rightarrow \mathbb{R}$ be a non-negative real-valued function.
We will consider the Hlawka inequality of the form 
\begin{equation}\label{eq:2^k}
F(x+y)^{1/2^k}+F(y+z)^{1/2^k}+F(z+x)^{1/2^k}\le F(x)^{1/2^k}+F(y)^{1/2^k}+F(z)^{1/2^k}+F(x+y+z)^{1/2^k},
\end{equation}
$\forall x,y,z\in G$, where $k$ is an integer.

\begin{pro}\label{pro:2^k}
Let $G$ be an abelian semigroup, and let  $x\mapsto F(x)$  be a non-negative real-valued function on $G$.

If $F$ is {\sl strong subadditive} (i.e.,  $F(x)+F(y)\ge F(x+y)$ and $F(x)+F(x+y)\ge F(y)$, $\forall x,y\in G$), and \eqref{eq:2^k} holds for some  $k_0\ge -1$, then \eqref{eq:2^k} holds for all $k\ge k_0$.

If $F$ is assumed to be superadditive  (i.e.,  $F(x)+F(y)\le F(x+y)$, $\forall x,y\in G$), and \eqref{eq:2^k} holds for some $k_0\le 0$, then \eqref{eq:2^k} holds for all $k\le k_0$.
\end{pro}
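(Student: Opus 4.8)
The plan is to run the scheme of Proposition~\ref{quadratic} while sweeping the exponent $1/2^k$, using Theorem~\ref{thm:operator-main} as a bridge between two consecutive levels of \eqref{eq:2^k}. Fix a triple $x,y,z\in G$, write $f_k:=F^{1/2^k}$, and instantiate the theorem exactly as in Table~\ref{tab:pro1} but with $X=G$ and $f=f_k$, choosing $a=f_k(x)+f_k(y)+f_k(z)$ and $b=f_k(x+y+z)$. Then $T(1)=3$ and $c=\tfrac{2}{a}T(f_k\circ\eta)=2$, so $T(1)-c=1$. Since the theorem reads $f^2\circ\eta=(f\circ\eta)\cdot(f\circ\eta)$ and $\bigl(F^{1/2^k}\bigr)^2=F^{1/2^{k-1}}=f_{k-1}$, the two forms become the Hlawka deficits at neighbouring levels:
\begin{align*}
C_1&=f_k(x)+f_k(y)+f_k(z)+f_k(x+y+z)-f_k(x+y)-f_k(y+z)-f_k(z+x),\\
C_2&=f_{k-1}(x)+f_{k-1}(y)+f_{k-1}(z)+f_{k-1}(x+y+z)-f_{k-1}(x+y)-f_{k-1}(y+z)-f_{k-1}(z+x).
\end{align*}
Thus $C_1\ge0$ is \eqref{eq:2^k} at level $k$ and $C_2\ge0$ is \eqref{eq:2^k} at level $k-1$.

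The structural input I would prove first is that raising $F$ to a power preserves the relevant hypothesis in the correct exponent window. For $0<t\le1$ the map $s\mapsto s^{t}$ is increasing and concave and vanishes at $0$, hence subadditive, so composing it with a strong subadditive nonnegative $F$ shows that $f_k=F^{1/2^k}$ is again strong subadditive for every $k\ge0$ (both $F^t(u+v)\le F^t(u)+F^t(v)$ and $F^t(u)+F^t(u+v)\ge F^t(v)$ survive the monotone subadditive outer map). Dually, for $t\ge1$ the map $s\mapsto s^t$ is increasing and convex and vanishes at $0$, hence superadditive, so a superadditive nonnegative $F$ yields a superadditive $f_k$ for every $k\le0$. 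Strong subadditivity of $f_k$ supplies both controls of branch~(I): $f_k(y+z)\le f_k(y)+f_k(z)$ gives $f_k\circ\xi+f_k\circ\eta\le a$, while $f_k(y+z)\le f_k(x)+f_k(x+y+z)$ gives $f_k\circ\xi-f_k\circ\eta\le b$ (and cyclically). Superadditivity of $f_k$ supplies branch~(II): $f_k(y+z)\ge f_k(y)+f_k(z)$ gives $f_k\circ\xi+f_k\circ\eta\ge a$, while $f_k(y+z)\le f_k(x+y+z)\le f_k(x)+f_k(x+y+z)$ (superadditivity plus $f_k\ge0$) gives $f_k\circ\xi-f_k\circ\eta\le b$.

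With the controls verified, both statements follow by induction over $k$ whenever $a>0$. In the strong subadditive case branch~(I) gives $C_2\ge0\Rightarrow C_1\ge0$, i.e. level $k-1$ forces level $k$; climbing from $k_0$ and using that $f_k$ is strong subadditive for all $k\ge k_0+1\ge0$ yields \eqref{eq:2^k} for every $k\ge k_0$. In the superadditive case branch~(II) gives $C_1\ge0\Rightarrow C_2\ge0$, i.e. level $k$ forces level $k-1$; descending from $k_0$ and using that $f_k$ is superadditive for all $k\le k_0\le0$ yields \eqref{eq:2^k} for every $k\le k_0$. This is exactly where the hypotheses $k_0\ge-1$ and $k_0\le0$ enter: they keep every applied step inside the exponent window where the power lemma holds.

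The only point where Theorem~\ref{thm:operator-main} cannot be applied is the degenerate locus $a=0$, that is $F(x)=F(y)=F(z)=0$, which I would dispatch directly. In the subadditive ascent, subadditivity of $f_k$ forces $f_k(x+y)=f_k(y+z)=f_k(z+x)=0$, so \eqref{eq:2^k} at level $k$ is trivial. In the superadditive descent, setting $A=f_k(x+y)$, $B=f_k(y+z)$, $C=f_k(z+x)$, $D=f_k(x+y+z)$, level $k$ reads $A+B+C\le D$, whence $A^2+B^2+C^2\le(A+B+C)^2\le D^2$, which is level $k-1$ since $f_{k-1}$ also vanishes at $x,y,z$. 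I expect this degenerate bookkeeping, together with pinning the exponent window to the stated ranges of $k_0$, to be the only delicate part; everything else is a direct reading of Theorem~\ref{thm:operator-main}.
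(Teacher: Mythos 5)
Your proposal is correct and follows essentially the same route as the paper: the identical instantiation of Theorem~\ref{thm:operator-main} (as in Table~\ref{tab:pro2}) with $f=F^{1/2^k}$ so that $C_1$ and $C_2$ are the Hlawka deficits at levels $k$ and $k-1$, the power-function subadditivity/superadditivity lemma to verify the sum and difference controls, the same induction pinned to the windows $k\ge0$ and $k\le0$, and the same degenerate handling of $a=0$ including the squaring trick in the superadditive descent. Your version is if anything slightly more careful than the paper's, since you make the inductive bookkeeping and the role of the hypotheses $k_0\ge-1$, $k_0\le0$ fully explicit.
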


\begin{proof}
 Given $a,b>0$, the function $(a^t+b^t)^{1/t}$ is decreasing on $(0,\infty)$.
\begin{table}
\centering
\caption{\small The concrete quantities of Theorem \ref{thm:operator-main} used in the proof of Proposition \ref{pro:2^k}.} 
\begin{tabular}{|l|l|}
               \hline
             Terminologies in Theorem \ref{thm:operator-main} & Concrete choices in Proposition \ref{pro:2^k} for fixed $x,y,z\in X$ \\
               \hline
                 $\Omega=$ & $\{1,2,3\}$\\
                 \hline
               $X=$ &  abelian semigroup $G$\\
               \hline
               $\sss=$ & $\R^{\{1,2,3\}}$\\
               \hline
               $T=$& $\sum_{\omega\in\{1,2,3\}}$, i.e., $T(g)=g(1)+g(2)+g(3)$, $\forall g\in\sss$\\
               \hline
               $\eta=$& $x,y,z$ for $\omega=1,2,3$ respectively\\
               \hline
               $\xi=$&$\sum_{\omega=1}^3\eta(\omega)-\eta$, i.e., $y+z, z+x, x+y$ for $\omega=1,2,3$ respectively\\
               \hline
               $f=$& $F^{\frac 1{2^{k}}}$\\
               \hline
               $a=$&$F(x)^{\frac 1{2^{k}}}+F(y)^{\frac 1{2^{k}}}+F(z)^{\frac 1{2^{k}}}$\\
               \hline
               $b=$&$F(x+y+z)^{\frac 1{2^{k}}}$\\
               \hline
            \end{tabular}
             \label{tab:pro2}
\end{table}

Case (1). $F$ is non-negative and strong  subadditive.
For any $0<\alpha\le 1$,
$$
F(x+y)^\alpha\leq (F(x)+F(y))^\alpha\leq F(x)^\alpha+F(y)^\alpha.
$$
Suppose \eqref{eq:2^k} holds for some $k_0\ge -1$. Then for any $k> k_0$, and any $x,y,z\in G$,
$$
F(x+y)^{\frac 1{2^{k}}}+F(z)^{\frac 1{2^{k}}}\leq F(x)^{\frac 1{2^{k}}}+F(y)^{\frac 1{2^{k}}}+F(z)^{\frac 1{2^{k}}}
$$
and
$$
F(x+y)^{\frac 1{2^{k}}}-F(z)^{\frac 1{2^{k}}}\leq F(x+y+z)^{\frac 1{2^{k}}}.
$$
Here, let $a$ and $b$ in Theorem \ref{thm:operator-main} be $F(x)^{\frac 1{2^{k}}}+F(y)^{\frac 1{2^{k}}}+F(z)^{\frac 1{2^{k}}}$ and $F(x+y+z)^{\frac 1{2^{k}}}$,
respectively. The detailed parameters are shown in Table~\ref{tab:pro2}. 
If $a\neq 0$ and $a+b>0$, then the proof is finished by Theorem \ref{thm:operator-main} (I).
If $a=0$, then $F(x)=F(y)=F(z)=F(x+y)=F(x+z)=F(y+z)=F(x+y+z)=0$ and \eqref{eq:2^k} is obvious.

\vspace{0.3cm}

\noindent Case (2). $F$ is non-negative and  superadditive, i.e., $F(x)+F(y)\leq F(x+y)$.

Note that for any $\alpha \geq 1$,
$$
F(x+y)^\alpha\geq(F(x)+F(y))^\alpha\geq F(x)^\alpha+F(y)^\alpha.
$$
In consequence, for any $k\leq0$ and any $x,y,z\in G$, we have
$$
F(x+y)^{\frac 1{2^{k}}}+F(z)^{\frac 1{2^{k}}}\geq F(x)^{\frac 1{2^{k}}}+F(y)^{\frac 1{2^{k}}}+F(z)^{\frac 1{2^{k}}}
$$
and
$$
 F(x+y+z)^{\frac 1{2^{k}}}\geq F(x+y)^{\frac 1{2^{k}}}+F(z)^{\frac 1{2^{k}}}\geq F(x+y)^{\frac 1{2^{k}}}-F(z)^{\frac 1{2^{k}}}.
 $$
 If $a\neq 0$ and $a+b>0$, by Theorem \ref{thm:operator-main}, the result is proved.
 If $a+b=0$, then $F(x)=F(y)=F(z)=F(x+y)=F(x+z)=F(y+z)=F(x+y+z)=0$, the result is obvious.
 If $a=0$, then $F(x)=F(y)=F(z)=0$. According to the condition, we have
 $$ F(x+y)^{1/2^{k}}+F(y+z)^{1/2^{k}}+F(z+x)^{1/2^{k}}\le F(x+y+z)^{1/2^{k}}$$
 for some $k\leq 0$.
 Taking the square of above inequality, there is
 $$
 F(x+y)^{1/2^{k-1}}+F(y+z)^{1/2^{k-1}}+F(z+x)^{1/2^{k-1}}\le F(x+y+z)^{1/2^{k-1}}.
 $$
 Hereto, the prove is completed.
\end{proof}

Now we show an interesting example even though this result seems to be elementary.
\begin{example}
Taking $G=L^p$ and $F=\|\cdot\|_p$, together with Corollary 2.1 in \cite{W73} and Proposition \ref{pro:2^k}, we have
$$\|a+b\|_p^{\frac 1{2^k}}+\|b+c\|_p^{\frac 1{2^k}}+\|c+a\|_p^{\frac 1{2^k}}\le \|a\|_p^{\frac 1{2^k}}+\|b\|_p^{\frac 1{2^k}}+\|c\|_p^{\frac 1{2^k}}+\|a+b+c\|_p^{\frac 1{2^k}}$$
for any $a,b,c\in L^p$, and $k\in \mathbb{N}$, where $1\le p\le 2$.

Replacing $a,b,c$ respectively by $a^{2^k}$, $b^{2^k}$, $c^{2^k}$, one gets
$$\|(a^{2^k}+b^{2^k})^{\frac 1{2^k}}\|_{2^kp}+\|(b^{2^k}+c^{2^k})^{\frac 1{2^k}}\|_{2^kp}+\|(c^{2^k}+a^{2^k})^{\frac 1{2^k}}\|_{2^kp} \le \|a\|_{2^kp}+\|b\|_{2^kp}+\|c\|_{2^kp}+\|(a^{2^k}+b^{2^k}+c^{2^k})^{\frac 1{2^k}}\|_{2^kp}.$$

For convenience, we define an operation $\Diamond_k$ by $a\Diamond_k b=(a^{2^k}+b^{2^k})^{\frac 1{2^k}}$ for $1\le k<+\infty$, $a\Diamond_0 b:= a+b$ and $a\Diamond_\infty b:=|a|\vee |b|:=\max\{|a|,|b|\}$. Then using this notation, we obtain
$$\|a \Diamond_k b\|_{2^kp}+\|b\Diamond_k c\|_{2^kp}+\|c\Diamond_k a\|_{2^kp} \le \|a\|_{2^kp}+\|b\|_{2^kp}+\|c\|_{2^kp}+\|a\Diamond_k b\Diamond_k c\|_{2^kp}$$
for any $p\in[1,2]$ and any $k\in \mathbb{N}\cup\{+\infty\}$. Thus
$$\|a\Diamond_k b\|_p+\|b\Diamond_k c\|_p+\|c\Diamond_k a\|_p \le \|a\|_p+\|b\|_p+\|c\|_p+\|a\Diamond_k b\Diamond_k c\|_p$$
holds for $p\in[2^k,2^{k+1}]$, and by taking $k\to+\infty$, we have
$$
\|\max\{|a|,|b|\}\|_\infty+\|\max\{|b|,|c|\}\|_\infty+\|\max\{|c|,|a|\}\|_\infty\le \|a\|_\infty+\|b\|_\infty+\|c\|_\infty+\|\max\{|a|,|b|,|c|\}\|_\infty
.$$

\end{example}

A direct application of Proposition \ref{pro:2^k} is the following Hlawka inequality on abelian groups.

\begin{example}[Theorem 2 in \cite{R15}]\label{cor:R15}
Let $G$ be an abelian group,  $x\mapsto |x|$ a non-negative symmetric and subadditive function on $G$ (i.e., $|-x|=|x|$ and $|x|+|y|\ge|x+y|$, $\forall x,y\in G$), and let $S:[0,\infty)\to [0,\infty)$ be concave. Then, if $\forall x,y,z\in G$,
$$S^2(|x+y|)+S^2(|y+z|)+S^2(|z+x|)\le S^2(|x|)+S^2(|y|)+S^2(|z|)+S^2(|x+y+z|),$$
so does $S$.

In fact, the function $F(\cdot):=S(|\cdot|)$ must be non-negative and strong subadditive. So, Proposition \ref{pro:2^k} is applicable here.
\end{example}

The following measure-type Hlawka inequality is non-trivial and it cannot be deduced from Theorem 2 in \cite{R15} (i.e., Example \ref{cor:R15} above), because a measure space equipped with any set operation is not a group. But it can be obtained straightforward by Proposition \ref{pro:2^k} since a measure space with any set operation becomes a semigroup.
\begin{example}
Let $G$ be a measure space and $F=\mu$ be the measure. For the case of $k=0$, note that $\mu(A)+\mu(B)+\mu(C)-\mu(A\cup B)-\mu(B\cup C)-\mu(C\cup A)+\mu(A\cup B\cup C)=\mu(A\cap B\cap C)\ge 0$ and for symmetric difference $\triangle$,
$\mu(A)+\mu(B)+\mu(C)-\mu(A\triangle B)-\mu(B\triangle C)-\mu(C\triangle A)+\mu(A\triangle B\triangle C)=3\mu(A\cap B\cap C)\ge0$.

According to Proposition \ref{pro:2^k}, we have for any $k\ge 0$,
$$
\mu(A)^{1/2^k}+\mu(B)^{1/2^k}+\mu(C)^{1/2^k}+\mu(A\cup B\cup C)^{1/2^k}\ge \mu(A\cup B)^{1/2^k}+\mu(B\cup C)^{1/2^k}+\mu(C\cup A)^{1/2^k}
$$
and
$$
\mu(A)^{1/2^k}+\mu(B)^{1/2^k}+\mu(C)^{1/2^k}+\mu(A\triangle B\triangle C)^{1/2^k}\ge \mu(A\triangle B)^{1/2^k}+\mu(B\triangle C)^{1/2^k}+\mu(C\triangle A)^{1/2^k}
$$
because $
\mu(A)+\mu(B)\ge \mu(A\cup B)\ge \mu(A\triangle B)$ and $\mu(A)+\mu(A\cup B)\ge \mu(A)+\mu(A\triangle B)\ge \mu(B)$.
\end{example}

\vspace{0.5cm}

\subsection{Applications to integral form}\label{sec:integral}
Next, we would pay our attention to the following setting.
 Let $\Omega$ be a nonempty set and let $G$ be an abelian group, and let $x\mapsto |x|$ be a non-negative symmetric and subadditive function on $G$ (i.e., $|-x|=|x|$ and $|x|+|y|\ge|x+y|$, $\forall x,y\in G$). The function spaces $G^\Omega$ and $\mathbb{R}^\Omega$ are also abelian groups under the natural  operation `$+$'. Take an abelian subgroup $\F\subset G^\Omega$ and a linear subspace $\widehat{\F}\subset \mathbb{R}^\Omega$ equipped with  $T:\widehat{\F}\to\mathbb{R}$ satisfying the \textbf{basic settings} in the beginning of Section \ref{sec:Hlawka-relation}. Moreover,\footnote{Here, for $f\in \F$, $|f|$ is a function mapping $\Omega$ to $[0,\infty)$.} $\forall f\in \F$, $|f|\in \widehat{\F}$, $1\in \widehat{\F}$.

\begin{table}
\centering
\caption{\small The concrete quantities of Theorem \ref{thm:operator-main} used in the proof of Proposition \ref{thm:groupmain}. }
\begin{tabular}{|l|l|}
               \hline
             Terminologies in Theorem \ref{thm:operator-main} & Concrete choices in Proposition \ref{thm:groupmain} \\
                 \hline
               $X=$ & abelian group $G$\\
               \hline
               $\sss=$ & $\widehat{\F}$\\
               \hline
               $\eta=$& $\hat{g}$ \\
               \hline
               $\xi=$& $\mathcal{T}g-\hat{g}$\\
               \hline
               $f=$& $S|\cdot|$\\
               \hline
               $a=$& $A$\\
               \hline
               $b=$& $T(S|\hat{g}|)$\\
               \hline
            \end{tabular}
             \label{tab:pro3}
\end{table}

Applying Theorem \ref{thm:operator-main} to the above restricted situations, we have:

 \begin{pro}\label{thm:groupmain} 
 Given $A\neq 0 \in \R$, an operator $\T: \F\to G$, and two maps  $g,\hat{g}\in \F$, let $S:[0,+\infty)\to [0,+\infty)$ be a concave function such that $A>0$ or $S(|\T g|)>0$ and when $x$ satisfies $S|\hat{g}(x)|+S|\T g|\ne S|\hat{g}(x)-\T g|$, there is $A\ge S|\hat{g}(x)|+S|\T g-\hat{g}(x)|$. Then, 
      \begin{equation}\label{eq:HlawkaS2}
  \left(T(1)-C\right)S^2|\T g|+T(S^2|\hat{g}|)\ge T(S^2|\hat{g}-\T g|)
  \end{equation}
  implies
    \begin{equation}\label{eq:Hlawka}
  \left(T(1)-C\right)S|\T g|+T(S|\hat{g}|)\ge T(S|\hat{g}-\T g|),
  \end{equation}
  where $C=2T(S|\hat{g}|)/A$.
 \end{pro}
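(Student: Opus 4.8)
The plan is to obtain this Proposition as a direct specialization of Theorem~\ref{thm:operator-main}(I) through the dictionary recorded in Table~\ref{tab:pro3}. Concretely, I would take $X=G$, $\sss=\widehat{\F}$, $\eta=\hat g$, $\xi=\T g-\hat g$, $f=S|\cdot|$, $a=A$, and difference-control constant $b=S|\T g|$ (the factor of $S|\T g|$ appearing in $C_1$, i.e.\ the intended difference bound rather than the literal $b$-entry of Table~\ref{tab:pro3}). With these choices one has $c=2T(S|\hat g|)/A=C$, and, using the symmetry $|{-v}|=|v|$ of the modulus to identify $T(f\circ\xi)=T(S|\T g-\hat g|)=T(S|\hat g-\T g|)$, a single substitution gives $C_1=(T(1)-C)S|\T g|+T(S|\hat g|)-T(S|\hat g-\T g|)$ and $C_2=(T(1)-C)S^2|\T g|+T(S^2|\hat g|)-T(S^2|\hat g-\T g|)$. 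Hence \eqref{eq:HlawkaS2} and \eqref{eq:Hlawka} assert precisely $C_2\ge 0$ and $C_1\ge 0$, so the Proposition is exactly the implication $C_2\ge 0\Rightarrow C_1\ge 0$ of part (I).

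It then remains to check the hypotheses, in the weakened form of the Remark following Theorem~\ref{thm:operator-main}: the difference control $f\circ\xi-f\circ\eta\le b$, the summation control $f\circ\xi+f\circ\eta\le a$ on $\tilde\Omega$, and $a\ne 0$, $a+b>0$. The set $\tilde\Omega$ here is $\{x:\,S|\hat g(x)|+S|\T g|\ne S|\hat g(x)-\T g|\}$, so the summation control is verbatim the standing hypothesis $A\ge S|\hat g(x)|+S|\T g-\hat g(x)|$. The decisive input is the difference control, and here I would convert the concavity of $S$ into two structural facts: a nonnegative concave function on $[0,\infty)$ is nondecreasing (otherwise concavity eventually drives it below $0$) and subadditive (from $S(u)\ge \tfrac{u}{u+v}S(u+v)$ and its twin for $v$). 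Combining these with the triangle inequality $|\T g-\hat g(\omega)|\le |\T g|+|\hat g(\omega)|$ for the subadditive modulus yields $S|\T g-\hat g(\omega)|\le S(|\T g|+|\hat g(\omega)|)\le S|\T g|+S|\hat g(\omega)|$, that is $f\circ\xi(\omega)-f\circ\eta(\omega)\le S|\T g|=b$ for every $\omega$.

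Finally I would settle $a+b>0$. Since $a+b=A+S|\T g|$ with $S\ge 0$, this holds at once when $A>0$; and if $\tilde\Omega\ne\emptyset$ the summation hypothesis forces $A\ge S|\hat g(x)|+S|\T g-\hat g(x)|\ge 0$, hence $A>0$ because $A\ne 0$, while the residual degenerate case is exactly what the alternative assumption $S(|\T g|)>0$ is there to cover. With every hypothesis in place, Theorem~\ref{thm:operator-main}(I) delivers $C_2\ge 0\Rightarrow C_1\ge 0$, which is the assertion. I expect the only real obstacle to be conceptual rather than computational: pinning down the correct dictionary (in particular the control constant $b=S|\T g|$) and establishing the monotonicity and subadditivity of nonnegative concave functions that turn the concavity hypothesis into the difference control; the bookkeeping around $\tilde\Omega$ and the sign condition $a+b>0$ is the only part demanding genuine care.
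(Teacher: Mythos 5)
Your proposal is correct and is essentially the paper's own route: the paper's proof is just the one-line specialization of Theorem~\ref{thm:operator-main} via Table~\ref{tab:pro3}, and you rightly read the table's $b$-entry $T(S|\hat g|)$ as a typo for the intended $b=S|\T g|$, supplying the verifications the paper leaves implicit (a nonnegative concave $S$ on $[0,\infty)$ is nondecreasing and subadditive, whence the difference control $S|\hat g(\omega)-\T g|\le S|\hat g(\omega)|+S|\T g|$, and $\tilde\Omega\ne\emptyset$ forces $A>0$, hence $a+b>0$). The only thin spot is the degenerate case $\tilde\Omega=\emptyset$ with $A<0$, which you delegate to the hypothesis $S(|\T g|)>0$ without detail and where the Remark indeed need not apply (since $A+S|\T g|>0$ can fail); but the conclusion holds outright there, because $f\circ\xi=f\circ\eta+S|\T g|$ pointwise gives $C_1=-2\,S|\T g|\,T(S|\hat g|)/A\ge 0$ when $A<0$ --- a corner the paper's one-line proof does not address either.
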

\begin{proof}
Taking $\xi=\mathcal{T}g-\hat{g}$, $\eta=\hat{g}$ and $f(\cdot)=S|\cdot|$ in Theorem \ref{thm:operator-main} (see Table~\ref{tab:pro3} for details), we immediately complete the proof.
\end{proof}

 The main theorems in  \cite{TTW00,R15} can be seen as direct conclusions of Proposition  \ref{thm:groupmain}.

\begin{proof}[A proof of Example \ref{cor:R15} (i.e., the main theorem in \cite{R15}) via Proposition \ref{thm:groupmain}]
Take $\Omega=\{1,2,3\}$, and for given $x,y,z\in G$, let $\hat{g}=g$ be defined as $g(1)=x$, $g(2)=y$ and $g(3)=z$. 
Let $T(S|g|)=S|g(1)|+S|g(2)|+S|g(3)|$ and $\T g=g(1)+g(2)+g(3)$ in Proposition \ref{thm:groupmain}. Then $T(1)=3$, $\T g-g(i)=g(j)+g(k)$, where $\{i,j,k\}=\{1,2,3\}$. Hence, the result is easy to check.
\end{proof}

\vspace{0.3cm}

Given an inner product space $(H,\langle\cdot,\cdot\rangle)$, suppose that  $\F\subset H^\Omega$ and $\widehat{\F}\subset \mathbb{R}^\Omega$ are linear spaces equipped with linear operators $\T: \F\to H$ and $T:\widehat{\F}\to\mathbb{R}$.  Then we have the following:

\begin{cor}\label{pro:inner}
Let $\T,T$ and  $f\in \F$ be such that  $T(|f|)>0$ and for any $a\in H$, there is $T\langle f,a\rangle=\langle \T f,a\rangle$, where $|\cdot|$ is the norm induced by the inner product. If      $T(|f|)\ge |f(x)|+|\T f-f(x)|$ whenever $x$ satisfies $-f(x)\ne \alpha \T f$ for any $\alpha\ge 0$, then the following holds:
  $$\left(T(1)-2\right)|\T f|+T(|f|)\ge T(|f-\T f|).$$
\end{cor}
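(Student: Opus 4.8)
The plan is to specialize Proposition \ref{thm:groupmain} by taking $S$ to be the identity map (which is concave), $g=\hat g=f$, and $A=T(|f|)$. Under this choice $S|\cdot|=|\cdot|$, so that $S|\T g|=|\T f|$, $T(S|\hat g|)=T(|f|)$, $T(S|\hat g-\T g|)=T(|f-\T f|)$, while the constant reduces to $C=2T(S|\hat g|)/A=2T(|f|)/T(|f|)=2$. With this dictionary the conclusion \eqref{eq:Hlawka} of Proposition \ref{thm:groupmain} reads precisely $(T(1)-2)|\T f|+T(|f|)\ge T(|f-\T f|)$, which is the asserted inequality, and the squared hypothesis \eqref{eq:HlawkaS2} becomes $(T(1)-2)|\T f|^2+T(|f|^2)\ge T(|f-\T f|^2)$. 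Thus it suffices to verify the side conditions of Proposition \ref{thm:groupmain} and then to establish this squared inequality.

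First I would check the side conditions. The requirement $A>0$ is immediate since $A=T(|f|)>0$ by hypothesis. For the qualifier condition, I would use the equality case of the triangle inequality in an inner product space: for each $x$, Cauchy--Schwarz gives $|f(x)-\T f|=|f(x)|+|\T f|$ if and only if $\langle f(x),\T f\rangle=-|f(x)|\,|\T f|$, i.e.\ if and only if $f(x)$ and $\T f$ are oppositely directed, that is $-f(x)=\alpha\T f$ for some $\alpha\ge0$. Consequently the set on which $|f(x)|+|\T f|\ne|f(x)-\T f|$ is contained in the set of $x$ satisfying $-f(x)\ne\alpha\T f$ for every $\alpha\ge0$ (the two coincide when $\T f\ne0$, and the former is empty when $\T f=0$, so the qualifier is then vacuous). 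On precisely the larger of these sets the hypothesis of the corollary furnishes $T(|f|)\ge|f(x)|+|\T f-f(x)|$, which is exactly the bound $A\ge S|\hat g(x)|+S|\T g-\hat g(x)|$ demanded by Proposition \ref{thm:groupmain}.

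The core step is the squared inequality, which I expect to hold in fact \emph{with equality}; it is the two-form identity underlying the whole scheme (compare \eqref{eq:Hlawka-classical-2form}). Expanding the induced norm pointwise gives $|f(x)-\T f|^2=|f(x)|^2-2\langle f(x),\T f\rangle+|\T f|^2$, and applying the linear functional $T$ together with the defining identity $T\langle f,a\rangle=\langle\T f,a\rangle$ at the choice $a=\T f$ (so that $T\langle f,\T f\rangle=\langle\T f,\T f\rangle=|\T f|^2$) yields $T(|f-\T f|^2)=T(|f|^2)-2|\T f|^2+T(1)|\T f|^2=(T(1)-2)|\T f|^2+T(|f|^2)$. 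Hence \eqref{eq:HlawkaS2} holds as an equality, and Proposition \ref{thm:groupmain} immediately delivers \eqref{eq:Hlawka}.

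The genuinely delicate point is the matching of the qualifier sets in the second step: one must recognize that the condition $-f(x)\ne\alpha\T f$ appearing in the corollary is nothing but the strict triangle inequality $|f(x)-\T f|<|f(x)|+|\T f|$, and this is exactly where the inner-product structure, rather than mere subadditivity, enters. Everything else is a direct substitution into Proposition \ref{thm:groupmain} combined with the linearity of $T$ and the identity $T\langle f,a\rangle=\langle\T f,a\rangle$.
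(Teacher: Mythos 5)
Your proof is correct and takes essentially the same route as the paper's: set $S$ equal to the identity in Proposition \ref{thm:groupmain} and observe that the two-form hypothesis \eqref{eq:HlawkaS2} holds with equality, via the expansion $T\left(|f-\T f|^2\right)=T(|f|^2)+(T(1)-2)|\T f|^2$ obtained from $T\langle f,\T f\rangle=\langle \T f,\T f\rangle$. The only difference is that you explicitly verify the qualifier condition through the equality case of the triangle inequality (including the vacuous case $\T f=0$), a step the paper dismisses as ``easy to verify.''
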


\begin{proof}
By the basic properties on  inner products, we have
\begin{align*}
T\left(|f-\T f|^2\right)&=T\left(|f|^2+|\T f|^2-2\langle f,\T f\rangle\right)
\\ ~&=T(|f|^2)+|\T f|^2T(1)-2\langle \T f,\T f\rangle
\\ ~&=T(|f|^2)+|\T f|^2(T(1)-2).
\end{align*}
Let $S$ in Proposition \ref{thm:groupmain} be the identity operator and
 the rest conditions in Proposition \ref{thm:groupmain} are easy to be verified. The prove is completed.
\end{proof}

It is clear that $T$ and $\T$ are uniquely determined by each other according to Riesz's  representation theorem.

\begin{cor}\label{pro:inner2integral}
Let $(\Omega,\mu)$ be a finite measurable space and let $(H,\|\cdot\|)$ be an inner product space. Suppose $f,g:\Omega\to H$ are two nonzero integrable functions satisfying
$$\frac{\int_\Omega fd\mu}{\int_\Omega \|f\|d\mu}=\frac{\int_\Omega gd\mu}{\int_\Omega \|g\|d\mu}.$$
Assume that for $x$ with $-g(x)\ne\alpha \int_\Omega fd\mu$ for any $\alpha\ge0$, there is $$\int_\Omega \|f\|d\mu\ge \|g(x)\|+\left\|g(x)-\int_\Omega fd\mu\right\|.$$
Then we have the following Hlawka  inequality
  $$\left(\mu(\Omega)-C\right)\left\| \int_\Omega fd\mu\right\|+\int_\Omega \|g\|d\mu\ge \int_\Omega\left\|g -\int_\Omega fd\mu\right\|d\mu,$$
  where $C=2\int_\Omega \|g\|d\mu/\int_\Omega \|f\|d\mu$.
\end{cor}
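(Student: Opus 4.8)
The plan is to exhibit Corollary \ref{pro:inner2integral} as a single instance of Proposition \ref{thm:groupmain}, specialized to the inner-product/integral setting, and then to verify the two-form premise \eqref{eq:HlawkaS2} by a direct inner-product computation in the spirit of the proof of Corollary \ref{pro:inner}. Concretely, I would take $G=H$, let $T(\phi)=\int_\Omega\phi\,d\mu$ on real integrable functions and let $\T(h)=\int_\Omega h\,d\mu$ (the vector-valued integral) on $H$-valued integrable functions; since $\mu$ is finite, $1\in\widehat{\F}$ and $T(1)=\mu(\Omega)$, while $T$ is linear and signature-preserving. The compatibility $T\langle h,a\rangle=\langle\T h,a\rangle$ holds for every $a\in H$ by pulling the constant $a$ out of the integral, so $\T$ is exactly the Riesz-type integral operator alluded to just before the statement.

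The second step is to fix the roles of the two maps. I would put $S=\mathrm{id}$, let the map called $g$ in Proposition \ref{thm:groupmain} be our $f$ (so that $\T g=\int_\Omega f\,d\mu$), let $\hat g$ be our $g$, and set $A=\int_\Omega\|f\|\,d\mu$. With these choices one reads off $C=2T(S|\hat g|)/A=2\int_\Omega\|g\|\,d\mu/\int_\Omega\|f\|\,d\mu$, matching the stated $C$; the condition $A>0$ is the nonvanishing of $f$; the difference-control $A\ge S|\hat g(x)|+S|\T g-\hat g(x)|$ becomes precisely the assumed $\int_\Omega\|f\|\,d\mu\ge\|g(x)\|+\|g(x)-\int_\Omega f\,d\mu\|$; and the exceptional set where $S|\hat g(x)|+S|\T g|=S|\hat g(x)-\T g|$ is exactly $\{x:\|g(x)\|+\|\int_\Omega f\,d\mu\|=\|g(x)-\int_\Omega f\,d\mu\|\}$, i.e. the set where $-g(x)$ is a nonnegative multiple of $\int_\Omega f\,d\mu$, which is the hypothesis's excluded case. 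Thus all the structural hypotheses of Proposition \ref{thm:groupmain} translate directly, and its conclusion \eqref{eq:Hlawka} reads $(\mu(\Omega)-C)\|\int_\Omega f\,d\mu\|+\int_\Omega\|g\|\,d\mu\ge\int_\Omega\|g-\int_\Omega f\,d\mu\|\,d\mu$, the claimed inequality.

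The crux, and the only place where genuine work is needed, is to check the two-form premise \eqref{eq:HlawkaS2}, namely $(\mu(\Omega)-C)\|\int_\Omega f\,d\mu\|^2+\int_\Omega\|g\|^2\,d\mu\ge\int_\Omega\|g-\int_\Omega f\,d\mu\|^2\,d\mu$. Writing $I_f=\int_\Omega f\,d\mu$ and $I_g=\int_\Omega g\,d\mu$ and expanding the right-hand side through the inner product yields $\int_\Omega\|g\|^2\,d\mu-2\langle I_g,I_f\rangle+\mu(\Omega)\|I_f\|^2$, so after cancellation the required inequality is equivalent to $2\langle I_g,I_f\rangle\ge C\|I_f\|^2$. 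Here the normalization hypothesis enters decisively: $I_f/\int_\Omega\|f\|\,d\mu=I_g/\int_\Omega\|g\|\,d\mu$ forces $I_g=\bigl(\int_\Omega\|g\|\,d\mu/\int_\Omega\|f\|\,d\mu\bigr)I_f$, whence $2\langle I_g,I_f\rangle=2\bigl(\int_\Omega\|g\|\,d\mu/\int_\Omega\|f\|\,d\mu\bigr)\|I_f\|^2=C\|I_f\|^2$; so \eqref{eq:HlawkaS2} holds in fact with equality. Feeding this into Proposition \ref{thm:groupmain} delivers \eqref{eq:Hlawka}.

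I expect the main obstacle to be not any single estimate but the bookkeeping of the two distinct maps $f,g$: one must keep $\T$ applied to $f$ while $T$ records $g$, so that $\|I_f\|$ and $\int_\Omega\|g\|\,d\mu$ appear in the advertised combination, and one must recognize that the alignment condition $I_f/\int_\Omega\|f\|\,d\mu=I_g/\int_\Omega\|g\|\,d\mu$ is exactly what makes the cross term $\langle I_g,I_f\rangle$ collapse to $\tfrac12 C\|I_f\|^2$, turning the two-form inequality into an identity. The degenerate possibilities, such as $I_f=0$, are harmless because then both sides are controlled directly, and the finiteness of $\mu$ guarantees $\mu(\Omega)=T(1)<\infty$ throughout.
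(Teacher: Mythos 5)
Your proposal is correct and takes essentially the same route as the paper: the paper's proof simply sets $\T f=\int_\Omega f\,d\mu$ and $T(\|f\|)=\int_\Omega\|f\|\,d\mu$ and invokes Proposition \ref{thm:groupmain}. Your extra verification that the normalization hypothesis forces the two-form premise \eqref{eq:HlawkaS2} to hold with equality (via $\int_\Omega g\,d\mu=\bigl(\int_\Omega\|g\|\,d\mu/\int_\Omega\|f\|\,d\mu\bigr)\int_\Omega f\,d\mu$, so the cross term equals $\tfrac12 C\bigl\|\int_\Omega f\,d\mu\bigr\|^2$) is precisely the step the paper leaves implicit, mirroring the computation displayed in the proof of Corollary \ref{pro:inner}.
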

\begin{proof}
Take $\T f= \int_\Omega f(\omega)d\mu$ and $T (\|f\|) = \int_\Omega\|f(t)\|d\mu(t)$. Now it is ready to apply  Proposition \ref{thm:groupmain} to complete the proof.
\end{proof}

\begin{cor}\label{cor:t=lambda}
Suppose that for $x$ with $- f(x)\ne\alpha \int_\Omega fd\mu$ for any $\alpha\ge0$, there is $$\int_\Omega \|f\|d\mu\ge t\| f(x)\|+\left\| t f(x)-\int_\Omega fd\mu\right\|$$
for some $t\ge 0$.
Then we have the following Hlawka-type  inequality
  $$\left(\mu(\Omega)-2t\right)\left\| \int_\Omega fd\mu\right\|+t\int_\Omega \|f\|d\mu\ge \int_\Omega\left\| t f -\int_\Omega fd\mu\right\|d\mu.$$
\end{cor}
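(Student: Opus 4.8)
The plan is to derive Corollary~\ref{cor:t=lambda} as a direct specialization of Corollary~\ref{pro:inner2integral}, applied to the pair consisting of $f$ and the rescaled map $g:=t f$. First I would dispose of the degenerate value $t=0$ separately, since Corollary~\ref{pro:inner2integral} requires nonzero integrable functions and $g=0$ is excluded there. When $t=0$ the asserted inequality reads $\mu(\Omega)\left\|\int_\Omega f\,d\mu\right\| \ge \int_\Omega\left\|\int_\Omega f\,d\mu\right\|d\mu$, whose right-hand integrand is the constant $\left\|\int_\Omega f\,d\mu\right\|$; hence both sides equal $\mu(\Omega)\left\|\int_\Omega f\,d\mu\right\|$ and the statement holds with equality. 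So I may assume $t>0$, in which case $g=tf$ is again a nonzero integrable $H$-valued function and Corollary~\ref{pro:inner2integral} is available.

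Next I would verify that the proportionality hypothesis of Corollary~\ref{pro:inner2integral} is automatic for this choice and compute the resulting constant. Since $g=tf$ with $t>0$, we have $\int_\Omega g\,d\mu = t\int_\Omega f\,d\mu$ and $\int_\Omega\|g\|\,d\mu = t\int_\Omega\|f\|\,d\mu$, so the common factor $t$ cancels and
$$\frac{\int_\Omega g\,d\mu}{\int_\Omega\|g\|\,d\mu} = \frac{\int_\Omega f\,d\mu}{\int_\Omega\|f\|\,d\mu},$$
which is exactly the required compatibility. The same cancellation gives the constant $C = 2\int_\Omega\|g\|\,d\mu/\int_\Omega\|f\|\,d\mu = 2t$, so that $\mu(\Omega)-C = \mu(\Omega)-2t$, matching the coefficient in the claimed conclusion.

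Finally I would translate the remaining hypotheses and the conclusion term by term. Because $t\ge 0$ we have $\|g(x)\| = t\|f(x)\|$, so the nondegeneracy set ``$-g(x)\ne\alpha\int_\Omega f\,d\mu$ for all $\alpha\ge0$'' of Corollary~\ref{pro:inner2integral} coincides, after the reparametrization $\beta=\alpha/t$, with the set ``$-f(x)\ne\beta\int_\Omega f\,d\mu$ for all $\beta\ge0$'' used here; on this set the integral bound $\int_\Omega\|f\|\,d\mu \ge \|g(x)\| + \left\|g(x)-\int_\Omega f\,d\mu\right\|$ becomes precisely the assumed bound $\int_\Omega\|f\|\,d\mu \ge t\|f(x)\| + \left\|tf(x)-\int_\Omega f\,d\mu\right\|$. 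Substituting $g=tf$ into the conclusion of Corollary~\ref{pro:inner2integral} then yields the stated Hlawka inequality. The argument is a clean rescaling, so there is no genuine obstacle; the only points requiring care are the separate handling of $t=0$ (where $g$ vanishes and Corollary~\ref{pro:inner2integral} does not directly apply, but the inequality degenerates to an identity) and the bookkeeping that the nondegeneracy condition is preserved under $\beta=\alpha/t$.
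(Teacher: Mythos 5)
Your proof is correct and follows exactly the route the paper intends: Corollary~\ref{cor:t=lambda} is stated as an immediate specialization of Corollary~\ref{pro:inner2integral} with $g=tf$, under which the ratio hypothesis holds automatically and $C=2t$, just as you compute. Your separate treatment of the degenerate case $t=0$ (where the inequality reduces to the identity $\mu(\Omega)\left\|\int_\Omega f\,d\mu\right\|\ge\mu(\Omega)\left\|\int_\Omega f\,d\mu\right\|$) is a careful touch the paper leaves implicit.
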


\begin{cor}\label{bar}
 If $\bar f $ is a rearrangement of $f$ with the same distribution, and for $x$ with $-\bar f(x)\ne\alpha \int_\Omega fd\mu$ for any $\alpha\ge0$,  there is $$\int_\Omega \|f\|d\mu\ge \|\bar f(x)\|+\left\|\bar f(x)-\int_\Omega fd\mu\right\|.$$
Then we have the following Hlawka  inequality
  $$\left(\mu(\Omega)-2\right)\left\| \int_\Omega fd\mu\right\|+\int_\Omega \|\bar f\|d\mu\ge \int_\Omega\left\|  \bar f -\int_\Omega fd\mu\right\|d\mu.$$
\end{cor}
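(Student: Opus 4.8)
The plan is to derive Corollary~\ref{bar} as a direct specialization of Corollary~\ref{pro:inner2integral}, obtained by setting $g=\bar f$. Everything then reduces to verifying that the two standing hypotheses of Corollary~\ref{pro:inner2integral} hold automatically for this choice, and that the constant $C$ collapses to the value $2$ appearing in the statement.

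The first and only substantive step is to record the consequences of $\bar f$ being a rearrangement of $f$ with the same distribution. Interpreting this as the equality of the pushforward measures $f_{*}\mu$ and $\bar{f}_{*}\mu$ on $H$, the change-of-variables formula gives $\int_\Omega\phi(\bar f)\,d\mu=\int_\Omega\phi(f)\,d\mu$ for every observable $\phi$ on $H$ whose integral exists. Taking $\phi=\|\cdot\|$ yields $\int_\Omega\|\bar f\|\,d\mu=\int_\Omega\|f\|\,d\mu$, and taking $\phi=\mathrm{id}_H$ (the Bochner integral) yields $\int_\Omega\bar f\,d\mu=\int_\Omega f\,d\mu$. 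The first identity forces the constant of Corollary~\ref{pro:inner2integral} to equal $C=2\int_\Omega\|\bar f\|\,d\mu/\int_\Omega\|f\|\,d\mu=2$, while the two identities together make the ratio hypothesis of Corollary~\ref{pro:inner2integral} hold trivially, since its numerators and denominators agree separately.

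It remains to match the control hypothesis. With $g=\bar f$, the assumption of Corollary~\ref{pro:inner2integral} becomes exactly the bound $\int_\Omega\|f\|\,d\mu\ge\|\bar f(x)\|+\|\bar f(x)-\int_\Omega f\,d\mu\|$ imposed, for the relevant $x$, in the statement of Corollary~\ref{bar}. Invoking Corollary~\ref{pro:inner2integral}, substituting $C=2$, and keeping $\int_\Omega\|g\|\,d\mu=\int_\Omega\|\bar f\|\,d\mu$ on the left-hand side then reproduces the asserted inequality verbatim.

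The one point demanding care is the transfer of integrals in the second paragraph: one must be sure that ``same distribution'' for the $H$-valued map $\bar f$ preserves both the scalar integral of $\|\cdot\|$ and the vector-valued Bochner integral. Equimeasurability settles the former classically, and the latter follows from integrating the identity map against the common pushforward measure, once integrability (and nonvanishing) of $f$, hence of $\bar f$, is granted. Beyond this bookkeeping the argument is a pure substitution, so I anticipate no genuine obstacle.
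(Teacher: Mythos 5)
Your proposal is correct and takes essentially the same route as the paper's own proof: specialize Corollary \ref{pro:inner2integral} with $g=\bar f$, use equidistribution to get $\int_\Omega \bar f\,d\mu=\int_\Omega f\,d\mu$ and $\int_\Omega \|\bar f\|\,d\mu=\int_\Omega \|f\|\,d\mu$, so that the ratio hypothesis holds and $C=2$. The change-of-variables bookkeeping you add merely makes explicit what the paper's one-line proof leaves implicit.
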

\begin{proof}
Clearly, the properties of the rearrangement imply that $ \int_\Omega fd\mu=\int_\Omega \bar fd\mu$ and $\int_\Omega \| f\|d\mu=\int_\Omega \|\bar f\|d\mu$. Hence, Corollary \ref{pro:inner2integral} is applicable here.
\end{proof}

Theorem 1 in  \cite{TTW00} could be viewed as a consequence  of Corollary \ref{bar}. In fact, taking $\bar f=f$ in Corollary \ref{bar}, it is easy to verify the following.
\begin{example}[Theorem 1 in \cite{TTW00}]\label{cor:TTW00}
Let $H$ be a Hilbert space, $(\Omega,\mu)$ be a finite measure space and let $f$ be a Bochner integrable $H$-valued function on $(\Omega,\mu)$. Suppose that
$$\int_\Omega\|f(t)\|d\mu(t)\ge \left\|f(\omega)-\int_\Omega f(t)d\mu(t)\right\|+\|f(\omega)\|\;\;(a.e., \omega\in\Omega_f),$$  where $\Omega_f=\{\omega\in\Omega:-f(\omega)\ne \alpha \int_\Omega f(t)d\mu(t)\text{ for any }\alpha\ge 0\}$.
Then
$$(\mu(\Omega)-2)\left\|\int_\Omega f(\omega)d\mu\right\|+\int_\Omega\|f(\omega)\|d\mu\ge \int_\Omega\left\|f(\omega)-\int_\Omega fd\mu\right\|d\mu.$$
\end{example}

Next remark contains some interesting examples as corollaries of Proposition  \ref{thm:groupmain}.
\begin{remark}Given  an inner product space $H$, we have:
\begin{itemize}
\item  For any $\lambda\in[0,1]$, $x,y,z\in H$,
$$(1-\lambda)(\|x\|+\|y\|+\|z\|)+(1+2\lambda)\|x+y+z\|\ge \|\lambda x+y+z\|+\|x+ \lambda y+z\|+\|x+\lambda y+z\|.$$
{\sl It is deduced by taking $\Omega=\{1,2,3\}$ and $t=(1-\lambda)$ in Corollary \ref{cor:t=lambda}, which is rather  different from Corollary 2 in  \cite{TTW00}.}
\item Let $\mu_i,\lambda\ge 0$ be such that $\sum_{i=1}^n\mu_i\|x_i\|\ge \lambda\mu_i\|x_i\|+\|\lambda x_i-\sum_{j=1}^n\mu_j x_j\|$ for any $1\leq i \leq n$. Then
    $$\left(\sum_{i=1}^n\mu_i-2\lambda\right)\left\|\sum_{i=1}^n\mu_ix_i\right\|+
    \lambda\sum_{i=1}^n\mu_i\|x_i\|\ge \sum_{i=1}^n\mu_i\left\|\lambda x_i-\sum_{j=1}^n\mu_jx_j\right\|.$$
{\sl It is deduced by taking $\Omega=\{1,\cdots,n\}$ and $\mu(i)=\mu_i$ in Corollary \ref{cor:t=lambda}, which is an improved version of Corollary 2 in  \cite{TTW00} and Proposition 11 in \cite{TTW09}.}
\end{itemize}
\end{remark}

\vspace{1cm}

{\bf Acknowledgements.} This research is supported by grant from the Project
funded by China Postdoctoral Science Foundations (No. 2019M660829). The author thanks her husband for interesting discussions. I am grateful to the anonymous referee for the comments and suggestions that
have greatly improved the first version of this paper.

 \end{document}